\documentclass[12pt]{amsart}
\usepackage{amsmath,amssymb,color,cite,verbatim}


\newtheorem{thm}[equation]{Theorem}

\newtheorem{lemma}[equation]{Lemma}
\newtheorem{cor}[equation]{Corollary}

\theoremstyle{definition}

\newcommand{\F}{\mathbb{F}}
\newcommand{\bP}{\mathbb{P}}

\usepackage[pdftex, bookmarks=false]{hyperref}

\title{Connecting two types of representations of a permutation of $\F_q$}

\author{Zhiguo Ding}
\address{
  Hunan Institute of Traffic Engineering,
  Changsha, Hunan 410005 China
}
\email{ding8191@gmail.com}

\date{\today}

\begin{document}

\begin{abstract}

In this paper, we connect two types of representations of a permutation $\sigma$ of the finite field $\F_q$.
One type is algebraic, in which the permutation is represented as the composition of degree-one polynomials and
$k$ copies of $x^{q-2}$, for some prescribed value of $k$. The other type is combinatorial, in which the permutation
is represented as the composition of a degree-one rational function followed by the product of $k$ $2$-cycles on
$\bP^1(\F_q):=\F_q\cup\{\infty\}$, where each $2$-cycle moves $\infty$. We show that, after modding out by obvious
equivalences amongst the algebraic representations, then for each $k$ there is a bijection between the algebraic
representations of $\sigma$ and the combinatorial representations of $\sigma$. We also prove analogous results
for permutations of $\bP^1(\F_q)$. One consequence is a new characterization of the notion of Carlitz rank of 
a permutation on $\F_q$, which we use elsewhere to provide an explicit formula for the Carlitz rank.  Another 
consequence involves a classical theorem of Carlitz, which says that if $q>2$ then the group of permutations of
$\F_q$ is generated by the permutations induced by degree-one polynomials and $x^{q-2}$. Our bijection provides 
a new perspective from which the two proofs of this result in the literature can be seen to arise naturally, 
without requiring the clever tricks that previously appeared to be needed in order to discover those proofs.

\end{abstract}

\thanks{A version of Theorem~\ref{bijection} of the present paper was originally proved as part of a joint project with Michael 
Zieve which resulted in the paper \cite{DZcr}. The author thanks Michael Zieve for encouraging him to publish Theorem~\ref{bijection} 
separately in the present paper. The author thanks the referees and Michael Zieve for suggesting improvements to the statement 
and proof of Theorem~\ref{bijection}, which yielded a stronger result than appeared in the original version of this paper.}

\maketitle

\section{Introduction}

Throughout, we assume that $q>2$ is a prime power and $\F_q$ is the finite field with $q$ elements. Note that permutations of $\F_q$ 
can be viewed as permutations of $\bP^1(\F_q) := \F_q\cup \{\infty\}$ which fixes $\infty$. We will study two completely different 
types of representations of permutations on $\bP^1(\F_q)$. The first type of representations is essentially algebraical, and the second 
type of representations is essentially combinatorial. 

More precisely, fixing a positive integer $k$ and a permutation $\sigma$ of $\bP^1(\F_q)$, we denote by $\mathcal{A}_{\sigma,k}$ 
the set of all representations of $\sigma$ of the form
$$\sigma = \mu(x) \circ x^{q-2} \circ (x-a_k) \circ x^{q-2} \circ (x-a_{k-1}) \circ \cdots \circ x^{q-2} \circ (x-a_1)$$ 
with $a_1, a_2, \dots, a_k \in \F_q$ and $\mu(x) \in \F_q(x)$ of degree one, and similarly we denote by $\mathcal{C}_{\sigma,k}$
the set of all representations of $\sigma$ of the form
$$\sigma = \nu(x) \circ (b_1,\infty) \circ (b_2,\infty) \circ \cdots \circ (b_k,\infty)$$ 
with $b_1, b_2, \dots, b_k \in \F_q$ and $\nu(x) \in \F_q(x)$ of degree one. 

The presentations in the set $\mathcal{A}_{\sigma,k}$ are algebraically nice, since they are compositions of a degree-one rational 
function $\mu(x)$ with the monomial $x^{q-2}$ and monic degree-one polynomials $x-a_i$ with $1\le i\le k$. The presentations in 
the set $\mathcal{C}_{\sigma,k}$ are combinatorially nice, since they are compositions of a degree-one rational function $\nu(x)$ 
with $2$-cycles on $\bP^1(\F_q)$ of the form $(b_i,\infty)$ with $1\le i\le k$.

Although $\mathcal{A}_{\sigma,k}$ and $\mathcal{C}_{\sigma,k}$ are completely different in nature, Theorem~\ref{first} gives a recipe to turn 
any representation in $\mathcal{A}_{\sigma,k}$ into a representation in $\mathcal{C}_{\sigma,k}$, and Theorem~\ref{second} gives a recipe to 
turn any representation in $\mathcal{C}_{\sigma,k}$ into a representation in $\mathcal{A}_{\sigma,k}$. Moreover, due to Theorem~\ref{inverse} 
the maps $\mathcal{F} : \mathcal{A}_{\sigma,k} \to \mathcal{C}_{\sigma,k}$ and $\mathcal{G} : \mathcal{C}_{\sigma,k} \to \mathcal{A}_{\sigma,k}$ 
induced by the above two recipes respectively are inverses to one another. In other words, there exist naturally two inverse 
bijections between $\mathcal{A}_{\sigma,k}$ and $\mathcal{C}_{\sigma,k}$, which are induced by the recipes as illustrated in Theorem~\ref{first} 
and Theorem~\ref{second} respectively, as stated in the following: 

\begin{thm} \label{bijection}
For any positive integer $k$ and any permutation $\sigma$ of $\bP^1(\F_q):= \F_q\cup \{\infty\}$ with $q>2$, there are two natural inverse 
bijections between the set $\mathcal{A}_{\sigma,k}$ of all representations of $\sigma$ of the form
$$\sigma = \mu(x) \circ x^{q-2} \circ (x-a_k) \circ x^{q-2} \circ (x-a_{k-1}) \circ \cdots \circ x^{q-2} \circ (x-a_1)$$ 
with $a_1, a_2, \dots, a_k \in \F_q$ and $\mu(x) \in \F_q(x)$ of degree one and the set $\mathcal{C}_{\sigma,k}$ of all representations 
of $\sigma$ of the form
$$\sigma = \nu(x) \circ (b_1,\infty) \circ (b_2,\infty) \circ \cdots \circ (b_k,\infty)$$ 
with $b_1, b_2, \dots, b_k \in \F_q$ and $\nu(x) \in \F_q(x)$ of degree one. In particular, the finite sets $\mathcal{A}_{\sigma,k}$ 
and $\mathcal{C}_{\sigma,k}$ have the same cardinality.
\end{thm}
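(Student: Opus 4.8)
The plan is to derive Theorem~\ref{bijection} as a formal consequence of the three results announced above: Theorem~\ref{first} will furnish an explicit recipe $\mathcal{F}\colon\mathcal{A}_{\sigma,k}\to\mathcal{C}_{\sigma,k}$, Theorem~\ref{second} an explicit recipe $\mathcal{G}\colon\mathcal{C}_{\sigma,k}\to\mathcal{A}_{\sigma,k}$, and Theorem~\ref{inverse} will assert that $\mathcal{G}\circ\mathcal{F}$ and $\mathcal{F}\circ\mathcal{G}$ are the respective identity maps, so that $\mathcal{F}$ and $\mathcal{G}$ are mutually inverse bijections. The last sentence of Theorem~\ref{bijection} is then immediate: a representation in $\mathcal{A}_{\sigma,k}$ is nothing but a tuple $(\mu;a_1,\dots,a_k)\in\mathrm{PGL}_2(\F_q)\times\F_q^{k}$, and likewise $\mathcal{C}_{\sigma,k}\subseteq\mathrm{PGL}_2(\F_q)\times\F_q^{k}$, so both sets are finite, and a bijection between them forces equal cardinality. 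Thus the entire task is to build $\mathcal{F}$ and $\mathcal{G}$ and verify that they are inverse.

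For the construction I would begin from the elementary identity
$$x^{q-2}\circ(x-a)=\tfrac{1}{x-a}\circ(a,\infty)\qquad\text{as permutations of }\bP^1(\F_q),$$
which holds because $x^{q-2}$ agrees with $1/x$ on $\F_q^{\times}$ while fixing both $0$ and $\infty$ (equivalently $x^{q-2}=(0,\infty)\circ\tfrac1x$). Applying it to each of the $k$ copies of $x^{q-2}$ rewrites a member of $\mathcal{A}_{\sigma,k}$ as
$$\sigma=\mu\circ\tfrac1{x-a_k}\circ(a_k,\infty)\circ\tfrac1{x-a_{k-1}}\circ(a_{k-1},\infty)\circ\cdots\circ\tfrac1{x-a_1}\circ(a_1,\infty).$$
The next step is to migrate all the degree-one rational factors to the far left. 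Sliding $\tfrac1{x-a}$ past a transposition to its left conjugates it, $(c,\infty)\circ\tfrac1{x-a}=\tfrac1{x-a}\circ\bigl(a+\tfrac1c,\,a\bigr)$, and the $2$-cycle $\bigl(a+\tfrac1c,\,a\bigr)$ shares the point $a$ with the transposition $(a,\infty)$ sitting immediately to its right, so the pair recombines via $(p,q)\circ(q,\infty)=(q,\infty)\circ(p,\infty)$ into two $2$-cycles that again move $\infty$. Iterating from the outermost block inward, one finds that once every rational factor has been collected on the left the composition reads
$$\sigma=\nu\circ(b_1,\infty)\circ(b_2,\infty)\circ\cdots\circ(b_k,\infty),\qquad\nu=\mu\circ\tfrac1{x-a_k}\circ\cdots\circ\tfrac1{x-a_1},$$
where $b_i=a_1+\cfrac{1}{a_2+\cfrac{1}{\ddots\,+\cfrac{1}{a_i}}}$ is the finite continued fraction built from $a_1,\dots,a_i$ (equivalently $b_i=(M_{a_1}\circ\cdots\circ M_{a_i})(\infty)$ with $M_a(z):=a+1/z$). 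The assignment $(\mu;a_1,\dots,a_k)\mapsto(\nu;b_1,\dots,b_k)$ is $\mathcal{F}$; inverting the continued-fraction recursion to get $a_1,\dots,a_k$ back from $b_1,\dots,b_k$ and then solving $\nu=\mu\circ\tfrac1{x-a_k}\circ\cdots\circ\tfrac1{x-a_1}$ for $\mu$ yields $\mathcal{G}$, and running the two procedures in succession visibly recovers the input, which is Theorem~\ref{inverse}.

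The step I expect to be the real obstacle is controlling the degenerate cases of the migration, since the recipe must be arranged to produce a genuine element of $\mathcal{C}_{\sigma,k}$ --- with exactly $k$ transpositions and every $b_i\in\F_q$ --- on the nose, with no quotienting. When a partial continued fraction passes through $\infty$ (for instance $a_2=0$, which would make the naive value of $b_2$ equal to ``$\infty$''), the conjugated $2$-cycle is literally $(a,\infty)$; it cancels against the adjacent $(a,\infty)$, the effective number of transpositions drops by two, and the count must be restored by inserting a canonically chosen trivial pair $(b,\infty)\circ(b,\infty)=\mathrm{id}$. Proving that these coincidences can be absorbed uniformly, that the canonical choices forced at such steps are matched in the two directions, and hence that $\mathcal{F}$ and $\mathcal{G}$ are well-defined and honestly mutually inverse --- not merely inverse modulo the obvious equivalences among the algebraic representations --- is the delicate heart of Theorems~\ref{first}, \ref{second} and~\ref{inverse}. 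Once that bookkeeping is in place, everything else reduces to routine composition computations in $\mathrm{PGL}_2(\F_q)$ together with the elementary algebra of the transpositions fixing $\F_q\setminus\{b,\infty\}$.
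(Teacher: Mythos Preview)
Your overall architecture---build explicit recipes $\mathcal{F}$ and $\mathcal{G}$ from commutation identities relating $x^{q-2}$, $x^{-1}$, $(x-a)$ and the transpositions $(c,\infty)$, then check they are mutually inverse---is exactly the paper's, and your continued-fraction formula $b_i=(M_{a_1}\circ\cdots\circ M_{a_i})(\infty)$ with $M_a(z)=a+1/z$ agrees with the paper's $b_i$ whenever no intermediate value vanishes. The divergence is precisely at the point you yourself flag as ``the real obstacle'': the degenerate cases.

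The paper does \emph{not} resolve these by inserting canonically chosen trivial pairs after a cancellation, and in fact it never encounters a degeneracy at all. The difference is the direction of migration. You push the M\"obius factors $\tfrac{1}{x-a}$ leftward, which conjugates $(c,\infty)$ into $(a+1/c,\,a)$ and leaves $\F_q$ when $c=0$. The paper instead peels off the leftmost $x^{q-2}=x^{-1}\circ(0,\infty)$ and pushes the resulting transposition $(0,\infty)$ all the way to the \emph{right}, through the remaining blocks $x^{q-2}\circ(x-a_i)$, using
\[
(c,\infty)\circ(x-a)=(x-a)\circ(c+a,\infty),\qquad (c,\infty)\circ x^{q-2}=x^{q-2}\circ(c^{q-2},\infty).
\]
Because $c\mapsto c^{q-2}$ maps $\F_q$ to $\F_q$ (with $0^{q-2}=0$), the label never escapes $\F_q$, and the recursion $c_{i,0}=0$, $c_{i,j}=c_{i,j-1}^{\,q-2}+a_{i-j+1}$, $b_i=c_{i,i}$ is defined unconditionally on all of $\F_q^k$. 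In your notation this amounts to replacing $M_a(z)=a+1/z$ by $P_a(z)=a+z^{q-2}$ and evaluating at $0$ instead of $\infty$: then $b_i=(P_{a_1}\circ\cdots\circ P_{a_i})(0)\in\F_q$ always, and your ``degenerate'' example $a_2=0$ simply gives $b_2=b_1=a_1$, so the two transpositions cancel as they must, with no ad hoc choice. The inverse recipe $G$ is built the same way, using $(e_j-e_1)^{q-2}$ rather than $1/(e_j-e_1)$, and Theorem~\ref{inverse} is then a clean induction on $\F_q^k$ with no special cases. Your proposed fix might be salvageable, but you would have to specify the canonical $b$ at each collapse and then prove that $\mathcal{G}$ unwinds exactly that choice; the paper's device of trading $1/z$ for $z^{q-2}$ makes all of that bookkeeping disappear.
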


There are two easy consequences of our results. The first consequence is a new characterization of the notion of Carlitz rank of 
a permutation on $\F_q$, based on which an explicit formula about Carlitz rank has been obtained in \cite{DZcr}. The second consequence 
involves a classical theorem of Carlitz, which says that if $q>2$ then the group of permutations of $\F_q$ is generated by the permutations 
induced by degree-one polynomials and $x^{q-2}$. Our bijection in Theorem~\ref{bijection} provides a new perspective from which the two 
proofs of this result in the literature can be seen to arise naturally, without requiring the clever tricks that previously appeared to 
be needed in order to discover those proofs.

This paper is organized as follows. In the next section, we prove some results about permutations on $\bP^1(\F_q)$ which will be used in 
our treatment. In section 3 we give a proof for Theorem~\ref{bijection} together with the explicit recipe, which turns an algebraically 
nice representation of a permutation of $\bP^1(\F_q)$ into a combinatorially nice representation of the same permutation and vice versa. 
We conclude in section 4 by illustrating the above mentioned two consequences of our results.

\section{Basic facts}

Let us begin with the following basic observation:

\begin{lemma}\label{basic}
We have $(0,\infty) = x^{-1}\circ x^{q-2}$ as permutations of\/ $\bP^1(\F_q)$. 
\end{lemma}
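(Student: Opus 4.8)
The plan is to verify the claimed identity pointwise on $\bP^1(\F_q)=\F_q\cup\{\infty\}$, so the first step is to pin down how each of the two maps on the right-hand side acts. Since $q>2$, the exponent $q-2$ is a positive integer, so $x^{q-2}$ is a genuine polynomial and, viewed as a permutation of $\bP^1(\F_q)$, it fixes $\infty$; it fixes $0$ because $0^{q-2}=0$; and for $c\in\F_q^{\times}$ it sends $c\mapsto c^{q-2}=c^{-1}$ by Fermat's little theorem $c^{q-1}=1$. The degree-one rational function $x^{-1}$ interchanges $0$ and $\infty$ and sends each $c\in\F_q^{\times}$ to $c^{-1}$. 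The one genuine subtlety, such as it is, is to resist writing $x^{q-2}=x^{-1}$ globally: these agree on $\F_q^{\times}$ but not at $0$ or $\infty$, and it is precisely the discrepancy at those two points that produces the transposition.

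Next I would compose the two maps (reading $f\circ g$ as ``apply $g$, then $f$'', as in the displayed representations in the Introduction) and evaluate on the three types of points. On $0$ we get $0\mapsto 0\mapsto\infty$; on $\infty$ we get $\infty\mapsto\infty\mapsto 0$; and on $c\in\F_q^{\times}$ we get $c\mapsto c^{-1}\mapsto c$. Hence $x^{-1}\circ x^{q-2}$ swaps $0$ and $\infty$ and fixes every other point of $\bP^1(\F_q)$, which is exactly the $2$-cycle $(0,\infty)$. (One may note in passing that the same computation works with the factors in the other order, since $(0,\infty)$ is an involution, but the stated order is the convenient one.)

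I do not anticipate any real obstacle here: once the actions of $x^{q-2}$ and $x^{-1}$ on the two ``points at infinity'' $0$ and $\infty$ are correctly recorded, the verification is a three-line case check. The value of the lemma lies not in its difficulty but in its role as the base case that lets one rewrite each monomial $x^{q-2}$ appearing in an algebraic representation in terms of a degree-one map composed with a transposition moving $\infty$, which is the bridge between $\mathcal{A}_{\sigma,k}$ and $\mathcal{C}_{\sigma,k}$ exploited in the proofs of Theorems~\ref{first} and \ref{second}.
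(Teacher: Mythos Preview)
Your proof is correct and follows essentially the same pointwise verification as the paper: both argue that $x^{-1}\circ x^{q-2}$ fixes every $c\in\F_q^{\times}$ since $c^{q-2}=c^{-1}$, and then check directly that $0$ and $\infty$ are interchanged. The only difference is that you spell out the images at $0$ and $\infty$ explicitly, whereas the paper leaves that step as an easy verification.
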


\begin{proof}
Note that $x^{q-2} = x^{-1}$ for any nonzero element $x\in \F_q$, so the map $x^{-1}\circ x^{q-2}$ fixes each element in $\F_q^*$. 
It is not hard to verify directly that $x^{-1}\circ x^{q-2}$ exchanges the remaining two points $0$ and $\infty$. 
\end{proof}

Next, we need to know the conjugations of a given $2$-cycle $(b,\infty)$ with $b\in \F_q$ by a degree-one polynomial $x-a$ and 
by the monomial $x^{q-2}$ respectively. For this purpose let us give first the following fact:

\begin{lemma}\label{trivial}
The relation $f\circ (u,v) = (f(u),f(v))\circ f$ holds for any injective map $f$ of sets from $X$ to $Y$ and any two distinct $u,v\in X$.
\end{lemma}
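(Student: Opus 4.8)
The plan is to verify the claimed equality of maps $X \to Y$ pointwise, evaluating both sides on an arbitrary $x \in X$ and splitting into the three natural cases determined by how $x$ relates to the pair $\{u,v\}$. Throughout I read composition right-to-left, so that $f\circ(u,v)$ sends $x$ to $f\bigl((u,v)(x)\bigr)$ while $(f(u),f(v))\circ f$ sends $x$ to $(f(u),f(v))\bigl(f(x)\bigr)$; here $(u,v)$ denotes the transposition of $X$ that swaps $u$ and $v$ and fixes every other element, and $(f(u),f(v))$ denotes the transposition of $Y$ that swaps $f(u)$ and $f(v)$ and fixes every other element. Note that $(f(u),f(v))$ is a well-defined transposition precisely because $f(u)\ne f(v)$, which follows from $u\ne v$ together with injectivity of $f$.

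First I would dispatch the two cases $x=u$ and $x=v$, where both sides unwind directly from the definition of a transposition. When $x=u$ the left side gives $f\bigl((u,v)(u)\bigr)=f(v)$, while the right side gives $(f(u),f(v))\bigl(f(u)\bigr)=f(v)$; the case $x=v$ is symmetric and gives $f(u)$ on both sides.

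The only case requiring an idea is $x\notin\{u,v\}$. Here the left side equals $f(x)$, since $(u,v)$ fixes $x$, and the right side equals $(f(u),f(v))\bigl(f(x)\bigr)$. To see this is also $f(x)$, I must know that $f(x)\notin\{f(u),f(v)\}$, so that the transposition $(f(u),f(v))$ fixes $f(x)$. This is exactly the point at which injectivity of $f$ is used: from $x\ne u$ and $x\ne v$, injectivity forces $f(x)\ne f(u)$ and $f(x)\ne f(v)$.

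Since all three cases agree, the two maps coincide at every $x\in X$, which establishes the identity. I expect the only genuine subtlety to be this third case, and it is also the reason injectivity is hypothesized at all: without injectivity some $x\notin\{u,v\}$ could satisfy $f(x)\in\{f(u),f(v)\}$, the transposition $(f(u),f(v))$ would then move $f(x)$, and the asserted identity would fail.
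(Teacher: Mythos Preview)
Your proof is correct and follows essentially the same approach as the paper's own proof: a three-case pointwise verification according to whether $x=u$, $x=v$, or $x\notin\{u,v\}$. You spell out more explicitly than the paper where injectivity is used (both to make $(f(u),f(v))$ a genuine transposition and to ensure $f(x)\notin\{f(u),f(v)\}$ in the third case), but the underlying argument is identical.
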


\begin{proof}
Both $f\circ (u,v)$ and $(f(u),f(v))\circ f$ send $x$ to $f(x)$ for any element $x\in X\setminus \{u,v\}$, while both of them 
send $u$ to $f(v)$ and send $v$ to $f(u)$.
\end{proof}

\begin{cor}\label{commute}
The following identities hold as permutations of\/ $\bP^1(\F_q)$: 
\begin{itemize}
\item $(x-a)\circ (b,\infty) = (b-a,\infty)\circ (x-a)$ for any  $a,b\in \F_q$,
\item $x^{q-2}\circ (b,\infty) = (b^{q-2},\infty)\circ x^{q-2}$ for any $b\in \F_q$.
\end{itemize}
\end{cor}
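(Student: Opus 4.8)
The plan is to obtain both identities as immediate instances of Lemma~\ref{trivial}, choosing the injective map $f$ appropriately and taking $\{u,v\}=\{b,\infty\}$. The one point that deserves a moment's attention is the hypothesis of Lemma~\ref{trivial}: it requires $f$ to be injective, so in each case I first confirm that the map in question really is a bijection of $\bP^1(\F_q)$, and then I simply read off where it sends $b$ and $\infty$.

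For the first identity, I take $f$ to be the permutation of $\bP^1(\F_q)$ induced by the degree-one polynomial $x-a$; this is visibly a bijection of $\F_q$ (with inverse $x+a$), which I extend to $\bP^1(\F_q)$ by fixing $\infty$. Applying Lemma~\ref{trivial} with $u=b$ and $v=\infty$, and using $f(b)=b-a$ and $f(\infty)=\infty$, gives $(x-a)\circ(b,\infty)=(b-a,\infty)\circ(x-a)$ directly.

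For the second identity, I first record that $x^{q-2}$ induces a permutation of $\bP^1(\F_q)$: since $(q-1)-(q-2)=1$, we have $\gcd(q-2,q-1)=1$, so $x\mapsto x^{q-2}$ permutes $\F_q^*$ and fixes $0$, hence permutes $\F_q$, and it fixes $\infty$ (this is also implicit in Lemma~\ref{basic}). Now Lemma~\ref{trivial} with $f=x^{q-2}$, $u=b$, $v=\infty$, together with $f(b)=b^{q-2}$ and $f(\infty)=\infty$, yields $x^{q-2}\circ(b,\infty)=(b^{q-2},\infty)\circ x^{q-2}$.

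I do not expect any real obstacle here: the substantive content has already been isolated in Lemma~\ref{trivial}, and all that remains is the bookkeeping of checking that $x-a$ and $x^{q-2}$ are bijections of $\bP^1(\F_q)$ and of computing their values at $b$ and $\infty$. The mildest subtlety is remembering to view $x-a$ and $x^{q-2}$ as maps on all of $\bP^1(\F_q)$ (so that $\infty$ is a legitimate choice for $v$), rather than merely on $\F_q$.
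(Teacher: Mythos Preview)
Your proof is correct and follows exactly the paper's approach: apply Lemma~\ref{trivial} with $X=Y=\bP^1(\F_q)$, $u=b$, $v=\infty$, and $f$ equal to $x-a$ and $x^{q-2}$ respectively. The only difference is that you spell out why $x-a$ and $x^{q-2}$ are bijections of $\bP^1(\F_q)$ and compute their values at $b$ and $\infty$, which the paper leaves implicit.
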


\begin{proof}
The results follows from Lemma~\ref{trivial} by taking $X = Y :=\bP^1(\F_q)$, $u:=b$, $v:=\infty$, letting $f(x):= x-a$ and $f(x):= x^{q-2}$ respectively.
\end{proof}

Corollary~\ref{commute} implies particularly that $x^{q-2}\circ (0,\infty) = (0,\infty)\circ x^{q-2}$, which is just equal to $x^{-1}$ by Lemma~\ref{basic}. 
Moreover, it is interesting to note that the composition of any two of the three permutations $x^{-1}, x^{q-2}, (0,\infty)$ of\/ $\bP^1(\F_q)$ is equal 
to the third one. In other words, the subgroup of permutations of $\bP^1(\F_q)$ with $q>2$ generated by $x^{-1}$, $x^{q-2}$, and $(0,\infty)$ is exactly 
the Klein four-group.

After conjugating $(0,\infty)$ by the translation $x+a$, we can obtain the following expression of a $2$-cycle of the form $(a,\infty)$ with $a\in \F_q$:

\begin{cor}\label{2-cycle}
$(a,\infty) = (x^{-1}+a)\circ x^{q-2}\circ (x-a)$ holds for any $a\in \F_q$.
\end{cor}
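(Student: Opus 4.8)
The plan is to conjugate the identity $(0,\infty) = x^{-1}\circ x^{q-2}$ from Lemma~\ref{basic} by the translation $x+a$. First I would apply Lemma~\ref{trivial} with $X=Y:=\bP^1(\F_q)$, $f(x):=x+a$, $u:=0$, and $v:=\infty$. Since $f$ fixes $\infty$ and sends $0$ to $a$, this yields
$$(x+a)\circ(0,\infty) = (a,\infty)\circ(x+a)$$
as permutations of $\bP^1(\F_q)$.

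Next I would compose both sides on the right with the inverse of the permutation $x+a$, which is $x-a$. On the right-hand side this cancels, leaving $(a,\infty)$, so we obtain $(a,\infty) = (x+a)\circ(0,\infty)\circ(x-a)$. Substituting $(0,\infty) = x^{-1}\circ x^{q-2}$ from Lemma~\ref{basic} gives $(a,\infty) = (x+a)\circ x^{-1}\circ x^{q-2}\circ(x-a)$. Finally I would collapse the leading composition $(x+a)\circ x^{-1}$: applying $x^{-1}$ and then adding $a$ sends each nonzero finite $x$ to $x^{-1}+a$, sends $0$ to $\infty$, and sends $\infty$ to $a$, so $(x+a)\circ x^{-1} = x^{-1}+a$ as permutations of $\bP^1(\F_q)$. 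This produces the claimed identity $(a,\infty) = (x^{-1}+a)\circ x^{q-2}\circ(x-a)$.

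There is essentially no obstacle here; the only thing requiring care is the bookkeeping of composition order and the behavior at the two special points $0$ and $\infty$, where the maps $x^{-1}$ and $x^{q-2}$ do not agree with literal field arithmetic. In particular one should confirm that $x^{-1}+a$ means the degree-one rational function $(ax+1)/x$, which is exactly what the composition $(x+a)\circ x^{-1}$ produces, so the statement is consistent with the constraint that the left-most factor in $\mathcal{A}_{\sigma,k}$-type representations be a degree-one rational function. As an alternative (or a sanity check) one could simply evaluate both sides of the asserted identity on an arbitrary point of $\bP^1(\F_q)$, splitting into the cases $x\in\F_q\setminus\{a\}$, $x=a$, and $x=\infty$, and see directly that each side is the transposition $(a,\infty)$.
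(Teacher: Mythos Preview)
Your proof is correct and follows essentially the same route as the paper: conjugate $(0,\infty)$ by the translation $x+a$, substitute $(0,\infty)=x^{-1}\circ x^{q-2}$ from Lemma~\ref{basic}, and collapse $(x+a)\circ x^{-1}$ to $x^{-1}+a$. The only cosmetic difference is that the paper obtains the conjugation identity $(a,\infty)=(x+a)\circ(0,\infty)\circ(x-a)$ by quoting Corollary~\ref{commute} rather than Lemma~\ref{trivial} directly, which amounts to the same thing.
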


\begin{proof}
By applications of Corollary~\ref{commute} and Lemma~\ref{basic}, we have 
$$(a,\infty) = (x+a)\circ (0,\infty)\circ (x-a) = (x+a)\circ x^{-1}\circ x^{q-2} \circ (x-a),$$  
which is equal to $(x^{-1}+a)\circ x^{q-2}\circ (x-a)$ obviously. 
\end{proof}

\section{Proof of Theorem~\ref{bijection}}

For a fixed positive integer $k$, let us define maps $F : \F_q^k \to \F_q^k$ and $G : \F_q^k \to \F_q^k$ as follows, which will be used to 
illustrate the recipes in Theorem~\ref{first} and Theorem~\ref{second} respectively.

Define $F : \F_q^k \to \F_q^k$ by sending $(a_1, a_2, \cdots, a_k)$ to $(b_1, b_2, \cdots, b_k)$, where $b_i:= c_{i,i}$ for $1\le i\le k$, 
and for any fixed $1\le i\le k$ we define $c_{i,j}$ with $0\le j\le i$ inductively by $c_{i,0}:= 0$ and $c_{i,j} := c_{i,j-1}^{q-2} + a_{i-j+1}$.

In order to define the map $G : \F_q^k \to \F_q^k$, we need to use the map $\Phi_{\ell} : \F_q^{\ell} \to \F_q^{\ell-1}$ for each $\ell$ with 
$2\le \ell \le k$, which is defined to send $(e_1, e_2, \dots, e_{\ell})$ to $((e_2-e_1)^{q-2}, (e_3-e_1)^{q-2}, \dots, (e_{\ell}-e_1)^{q-2})$. 
For ease of notation, we will drop the subscript $\ell$ and write $\Phi$ for $\Phi_{\ell}$ when the value of $\ell$ is clear from context.

Define $G : \F_q^k \to \F_q^k$ by sending $(b_1, b_2, \dots, b_k)$ to $(a_1, a_2, \dots, a_k)$, where for $1\le i\le k$ we denote by $a_i$ 
the first entry of $\Phi^{i-1}(b_1, b_2, \dots, b_k)$, in which the map $\Phi^{i-1} : \F_q^k \to \F_q^{k-i+1}$ is really the composition 
$\Phi_{k-i+2} \circ \Phi_{k-i+3} \circ \cdots \circ \Phi_k$ with the convention that $\Phi^0$ is the identity map on $\F_q^k$. More precisely, 
if we write $$\Phi^{i-1}(b_1, b_2, \dots, b_k) = (d_{1,i-1}, d_{2,i-1}, \dots, d_{k-i+1,i-1})$$ for $1\le i\le k$, then by definition we have
$$(a_1, a_2, \dots, a_k) := (d_{1,0}, d_{1,1}, \dots, d_{1,k-1}).$$

\begin{thm}\label{inverse}
The maps $F : \F_q^k \to \F_q^k$ and $G : \F_q^k \to \F_q^k$ are inverses of one another. In particular, both $F$ and $G$ are bijections on $\F_q^k$.
\end{thm}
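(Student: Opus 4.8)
The plan is to show that $G$ is a left inverse of $F$, i.e. $G(F(\mathbf{a})) = \mathbf{a}$ for every $\mathbf{a} = (a_1,\dots,a_k) \in \F_q^k$; since both maps are self-maps of the finite set $\F_q^k$, this single inclusion forces both to be bijections and mutual inverses. So I would fix $\mathbf{a}$, set $(b_1,\dots,b_k) := F(\mathbf{a})$, and trace what $G$ does to $(b_1,\dots,b_k)$, aiming to recover $a_i$ as the first entry of $\Phi^{i-1}(b_1,\dots,b_k)$ for each $i$.

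The key is to find the right description of the intermediate quantities $c_{i,j}$ appearing in the definition of $F$, and to match them against the iterates $\Phi^{m}(b_1,\dots,b_k)$. Unwinding the definition, $c_{i,i} = b_i$ is obtained from $a_1,\dots,a_i$ by the recursion $c_{i,j} = c_{i,j-1}^{q-2} + a_{i-j+1}$ starting from $c_{i,0}=0$; in particular $c_{i,i-1}^{q-2} = b_i - a_1$. The natural guess is that, writing $\Phi^{m}(b_1,\dots,b_k) = (d_{1,m},d_{2,m},\dots,d_{k-m,m})$, one has $d_{r,m} = c_{m+r,\,m+r-m-1}^{\,q-2}$... more precisely I expect an identity of the shape
$$
\Phi^{m}(b_1,b_2,\dots,b_k) = \bigl(c_{m+1},\,c_{m+2},\,\dots,\,c_{k}\bigr)
$$
for a suitable family of elements $c_j$ built from $a_{m+1},a_{m+2},\dots$ by the same kind of recursion that defines $F$ but truncated to start at index $m+1$. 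I would make this precise by induction on $m$: the base case $m=0$ is the definition $b_i = c_{i,i}$; for the inductive step one applies $\Phi$, whose effect is to subtract the first entry and raise to the $(q-2)$ power, and then check this exactly implements ``peel off $a_{m+1}$ and re-index,'' using $c_{i,j} - a_{?} $ cancellations together with $c_{i,j} = c_{i,j-1}^{q-2}+a_{i-j+1}$. The first entry of $\Phi^{i-1}(b_1,\dots,b_k)$ then works out to be precisely $a_i$, which is what we want.

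I expect the main obstacle to be purely bookkeeping: getting the double-indexing of the $c_{i,j}$ to line up correctly with the single shifting index of $\Phi^{m}$, and being careful about the fact that $x^{q-2}$ is not literally $x^{-1}$ at $0$ (though here all the arguments of $x^{q-2}$ that matter turn out to be handled uniformly, since $c_{i,j-1}^{q-2}$ appears both in the definition of $F$ and, via $\Phi$, in the computation of $G$, so the $0$-versus-nonzero distinction never actually breaks the algebra). An alternative, possibly cleaner, route would be to prove instead that $F\circ G = \mathrm{id}$, or to interpret both $F$ and $G$ as the coordinate-level shadow of the operator identity in Corollary~\ref{2-cycle} and Corollary~\ref{commute} — that is, to show that $F$ records exactly the change of parameters when one rewrites $x^{q-2}\circ(x-a_k)\circ\cdots$ as $\nu(x)\circ(b_1,\infty)\circ\cdots\circ(b_k,\infty)$ by repeatedly applying Corollary~\ref{2-cycle} and pushing the leftover degree-one maps past the transpositions via Corollary~\ref{commute}, and that $G$ is literally the inverse substitution; then $F$ and $G$ are inverse by construction and no separate computation is needed. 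I would try the direct inductive computation first and fall back to this conceptual argument if the indexing proves too unwieldy.
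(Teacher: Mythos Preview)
Your proposal is correct and follows essentially the same route as the paper: show $G\circ F=\mathrm{id}$ on the finite set $\F_q^k$ by tracking the $c_{i,j}$ through the iterates of $\Phi$, then conclude that $F$ and $G$ are inverse bijections. The precise inductive identity the paper establishes is $d_{i,j}=c_{i+j,i}$ (with $\Phi^{j}(b_1,\dots,b_k)=(d_{1,j},\dots,d_{k-j,j})$), whose inductive step uses exactly the cancellation $c_{i+j,i+1}-c_{j,1}=c_{i+j,i}^{q-2}$ together with the fact that $x\mapsto x^{q-2}$ is an involution on all of $\F_q$; this yields $d_{1,i-1}=c_{i,1}=a_i$, which is the bookkeeping you were anticipating.
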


\begin{proof}
It suffices to show that $G \circ F$ is the identity map on $\F_q^k$, since then $F$ is injective and $G$ is surjective, but since each 
of $F$ and $G$ is a map on $\F_q^k$, it follows that $F$ and $G$ are bijective, whence $F$ and $G$ are inverse bijections since $G \circ F$ 
is the identity map on $\F_q^k$ again.

Now, let us show that $G \circ F$ is the identity map on $\F_q^k$. Given $a_1, a_2, \dots, a_k$ in $\F_q$, recall that for $1 \le i \le k$ 
the elements $c_{i,j}$ with $0\le j\le i$ are defined inductively by $c_{i,0} = 0$ and $c_{i,j} = c_{i,j-1}^{q-2} + a_{i-j+1}$. Put $b_j = c_{j,j}$ 
for $1 \le j \le k$, so that by the definition of $F$ we get $$F(a_1, a_2, \dots, a_k) = (b_1, b_2, \dots, b_k).$$ For $1\le i\le k$ let us write 
$$\Phi^{i-1} (b_1, b_2, \dots, b_k) = (d_{1,i-1}, d_{2,i-1}, \dots, d_{k-i+1,i-1}),$$ then by the definition of $G$ we have
$$G(b_1, b_2, \dots, b_k) = (d_{1,0}, d_{1,1}, \dots, d_{1,k-1}).$$
We will show by induction on $j$ that $d_{i,j} = c_{i+j,i}$ holds for all $i, j$ with $0 \le j \le k-1$ and $1 \le i \le k-j$. The base case $j=0$ 
says that $d_{i,0} = c_{i,i}$ for $1\le i\le k$, which is true since both sides equal $b_i$. Inductively, if $1 \le j \le k-1$, then for each 
$1\le i\le k-j$ we have
\begin{align*}
  & d_{i,j} = (d_{i+1,j-1} - d_{1,j-1})^{q-2} = (c_{i+j,i+1} - c_{j,1})^{q-2} \\
= & (c_{i+j,i+1} - a_j)^{q-2} = (c_{i+j,i}^{q-2})^{q-2} = c_{i+j,i},
\end{align*} 
which concludes the induction. Thus 
\begin{align*}
  & G\circ F (a_1, a_2, \dots, a_k) = G(b_1, b_2, \dots, b_k) \\
= & (d_{1,0}, d_{1,1}, \dots, d_{1,k-1}) = (c_{1,1}, c_{2,1}, \dots, c_{k,1}) = (a_1, a_2, \dots, a_k). 
\end{align*}
So $G \circ F$ is the identity map on $\F_q^k$, whence as explained above it follows 
that $F$ and $G$ are inverse bijections on $\F_q^k$. 
\end{proof}

\begin{thm}\label{first}
For any $a_1, a_2, \dots, a_k$ in $\F_q$, denote $$(b_1, b_2, \cdots, b_k) := F(a_1, a_2, \cdots, a_k),$$ then 
\begin{align*}
  & x^{q-2} \circ (x-a_k) \circ x^{q-2} \circ (x-a_{k-1}) \circ \cdots \circ x^{q-2} \circ (x-a_1) \\
= & \nu(x) \circ (b_1,\infty) \circ (b_2,\infty) \circ \cdots \circ (b_k,\infty),
\end{align*}
where $$\nu(x) := x^{-1} \circ (x-a_k) \circ x^{-1} \circ (x-a_{k-1}) \circ \cdots \circ x^{-1} \circ (x-a_1)$$ 
is a degree-one rational function in $\F_q(x)$. 
\end{thm}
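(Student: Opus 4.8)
The plan is to prove the displayed identity by induction on $k$, with two ingredients: Corollary~\ref{2-cycle}, which trades a factor $x^{q-2}\circ(x-a)$ for a factor $x^{-1}\circ(x-a)$ at the cost of producing a single $2$-cycle through $\infty$; and Corollary~\ref{commute}, which slides such a $2$-cycle past the remaining factors. Composing the identity of Corollary~\ref{2-cycle} on the left with the compositional inverse of the degree-one rational function $x^{-1}+a$, which is $x^{-1}\circ(x-a)$, yields the identity
$$x^{q-2}\circ(x-a)=x^{-1}\circ(x-a)\circ(a,\infty)\qquad\text{for every }a\in\F_q,$$
which also shows that $\nu(x)$, being a composition of degree-one rational functions, is itself a degree-one element of $\F_q(x)$.

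Before the main induction I would isolate the sliding step as a lemma, proved by a short induction on $m\ge 0$: for any $c\in\F_q$ and $a_1,\dots,a_m\in\F_q$,
$$(c,\infty)\circ x^{q-2}\circ(x-a_m)\circ\cdots\circ x^{q-2}\circ(x-a_1)=\bigl(x^{q-2}\circ(x-a_m)\circ\cdots\circ x^{q-2}\circ(x-a_1)\bigr)\circ(c_m,\infty),$$
where $c_0:=c$ and $c_j:=c_{j-1}^{q-2}+a_{m-j+1}$ for $1\le j\le m$. The one-block case follows from the two identities of Corollary~\ref{commute}: push $(c,\infty)$ first past $x^{q-2}$ to get $(c^{q-2},\infty)$ — here one uses that $x\mapsto x^{q-2}$ is an involution of $\bP^1(\F_q)$, so $b^{q-2}=c$ forces $b=c^{q-2}$ — and then past $x-a_1$ to get $(c^{q-2}+a_1,\infty)$; the inductive step is immediate from associativity of composition.

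For the induction on $k$, write $L_j:=x^{q-2}\circ(x-a_j)\circ\cdots\circ x^{q-2}\circ(x-a_1)$, with $L_0$ the identity, so that $L_k=x^{q-2}\circ(x-a_k)\circ L_{k-1}$ and $L_{k-1}$ is the left-hand side of the theorem for the truncated tuple $(a_1,\dots,a_{k-1})$. Applying the trading identity to the leftmost factor gives $L_k=x^{-1}\circ(x-a_k)\circ(a_k,\infty)\circ L_{k-1}$, and the sliding lemma (with $m=k-1$ and $c=a_k$) rewrites $(a_k,\infty)\circ L_{k-1}$ as $L_{k-1}\circ(c_{k-1},\infty)$. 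The crux is that the recursion $c_0=a_k$, $c_j=c_{j-1}^{q-2}+a_{k-j}$ produced here is, up to an index shift, exactly the recursion $c_{k,0}=0$, $c_{k,j}=c_{k,j-1}^{q-2}+a_{k-j+1}$ defining the $k$-th coordinate $b_k=c_{k,k}$ of $F$ (indeed $c_{k,1}=a_k$ and $c_j=c_{k,j+1}$ for all $j$), so that $c_{k-1}=c_{k,k}=b_k$. Finally one applies the inductive hypothesis to $L_{k-1}$ — valid because the first $k-1$ coordinates of $F(a_1,\dots,a_k)$ depend only on $a_1,\dots,a_{k-1}$ and hence equal $F(a_1,\dots,a_{k-1})$ — to obtain $L_{k-1}=\bigl(x^{-1}\circ(x-a_{k-1})\circ\cdots\circ x^{-1}\circ(x-a_1)\bigr)\circ(b_1,\infty)\circ\cdots\circ(b_{k-1},\infty)$; prepending $x^{-1}\circ(x-a_k)$ to the degree-one factor gives precisely $\nu(x)$, which completes the step, while the base case $k=1$ is just the trading identity together with $b_1=a_1$. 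I expect the only real obstacle to be the index bookkeeping that matches the sliding lemma's recursion to the definition of $F$; everything else is formal manipulation with Corollaries~\ref{commute} and~\ref{2-cycle}.
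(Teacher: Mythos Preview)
Your proposal is correct and follows essentially the same route as the paper's proof: both argue by induction on $k$, use Corollary~\ref{2-cycle} to rewrite the leftmost block $x^{q-2}\circ(x-a_k)$ as $x^{-1}\circ(x-a_k)\circ(a_k,\infty)$, then use Corollary~\ref{commute} repeatedly to push the resulting $2$-cycle rightward through the remaining $k-1$ blocks (producing exactly the recursion $c_{k,j}=c_{k,j-1}^{q-2}+a_{k-j+1}$ that defines $b_k$), and finally invoke the inductive hypothesis on the truncated composition. The only difference is cosmetic: you package the sliding step as a separate lemma with its own induction on $m$, whereas the paper carries out the same telescoping inline via the chain $\lambda_{i+1}\circ(c_{k,k-i},\infty)\circ\rho_i=\lambda_i\circ(c_{k,k-i+1},\infty)\circ\rho_{i-1}$.
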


\begin{proof}
We prove it by induction on $k$. The base case $k=1$ says $$x^{q-2} \circ (x-a_1) = x^{-1} \circ (x-a_1) \circ (b_1,\infty)$$ 
where $b_1 = F(a_1) = a_1$. This identity follows directly from Corollary~\ref{2-cycle}. Inductively, suppose $k>1$ and $a_1, a_2, \dots, a_k$ 
are in $\F_q$. By definition, for $1\le i\le k$ we have $c_{i,0} = 0$ and $c_{i,j} = c_{i,j-1}^{q-2} + a_{i-j+1}$ with $1\le j\le i$. Write 
$b_i = c_{i,i}$ for $1\le i\le k$, so that $$(b_1, b_2, \dots, b_k) = F(a_1, a_2, \dots, a_k).$$ For ease of expression, let us denote 
$$\lambda_i := x^{-1} \circ (x-a_k) \circ x^{q-2} \circ (x-a_{k-1}) \circ x^{q-2} \circ \cdots \circ x^{q-2} \circ (x-a_i)$$ for $1\le i\le k$, 
and denote $$\rho_i := x^{q-2} \circ (x-a_i) \circ x^{q-2} \circ (x-a_{i-1}) \circ \cdots \circ x^{q-2} \circ (x-a_1)$$ for $0\le i\le k$ where 
by convention $\rho_0 = x$. Write 
$$\sigma := x^{q-2} \circ (x-a_k) \circ x^{q-2} \circ (x-a_{k-1}) \circ \cdots \circ x^{q-2} \circ (x-a_1).$$ Hence we have
\begin{align*}
\sigma = \rho_k = & x^{q-2} \circ (x-a_k) \circ \rho_{k-1} \\
= & x^{-1} \circ (0,\infty) \circ (x-a_k) \circ \rho_{k-1} \\
= & x^{-1} \circ (x-a_k) \circ (a_k,\infty) \circ \rho_{k-1} \\
= & \lambda_k \circ (c_{k,1},\infty) \circ \rho_{k-1}. 
\end{align*}
We claim that $$\lambda_{i+1} \circ (c_{k,k-i},\infty) \circ \rho_i = \lambda_i \circ (c_{k,k-i+1},\infty) \circ \rho_{i-1}$$ 
for $1\le i\le k-1$. Indeed, for each $i$ with $1\le i\le k-1$, we have 
\begin{align*}
  & (c_{k,k-i},\infty) \circ \rho_i \\
= & (c_{k,k-i},\infty) \circ x^{q-2} \circ (x-a_i) \circ \rho_{i-1} \\
= & x^{q-2} \circ (c_{k,k-i}^{q-2},\infty) \circ (x-a_i) \circ \rho_{i-1} \\
= & x^{q-2} \circ (x-a_i) \circ (c_{k,k-i+1},\infty) \circ \rho_{i-1}. 
\end{align*}
Thus by prepending $\lambda_{i+1}$ to these permutations we get 
\begin{align*}
  & \lambda_{i+1} \circ (c_{k,k-i},\infty) \circ \rho_i \\
= & \lambda_{i+1} \circ x^{q-2} \circ (x-a_i) \circ (c_{k,k-i+1},\infty) \circ \rho_{i-1} \\
= & \lambda_i \circ (c_{k,k-i+1},\infty) \circ \rho_{i-1},
\end{align*}
which concludes the proof of the claim. So we have 
$$\sigma = \lambda_k \circ (c_{k,1},\infty) \circ \rho_{k-1} = \lambda_{k-1} \circ (c_{k,2},\infty) \circ \rho_{k-2} 
= \cdots = \lambda_1 \circ (c_{k,k},\infty) \circ \rho_0,$$ which by definition is equal to 
$$x^{-1} \circ (x-a_k) \circ x^{q-2} \circ (x-a_{k-1}) \circ x^{q-2} \circ \cdots \circ x^{q-2} \circ (x-a_1) \circ (b_k,\infty).$$ 
By inductive hypothesis, we have
\begin{align*}
  & x^{q-2} \circ (x-a_{k-1}) \circ x^{q-2} \circ (x-a_{k-2}) \circ \cdots \circ x^{q-2} \circ (x-a_1) \\
= & x^{-1} \circ (x-a_{k-1}) \circ x^{-1} \circ (x-a_{k-2}) \circ \cdots \circ x^{-1} \circ (x-a_1) \circ \\
  & \circ (b_1,\infty) \circ (b_2,\infty) \circ \cdots \circ (b_{k-1},\infty).
\end{align*} 
Therefore, we get 
\begin{align*}
\sigma = & x^{-1} \circ (x-a_k) \circ x^{-1} \circ (x-a_{k-1}) \circ \cdots \circ x^{-1} \circ (x-a_1) \circ \\
 & \circ (b_1,\infty) \circ (b_2,\infty) \circ \cdots \circ (b_k,\infty),
\end{align*}
 which concludes the induction and hence concludes the proof.
\end{proof}

\begin{thm}\label{second}
For any $b_1, b_2, \dots, b_k$ in $\F_q$, denote $$(a_1, a_2, \dots, a_k) := G(b_1, b_2, \dots, b_k),$$ then 
\begin{align*}
  & (b_1, \infty) \circ (b_2,\infty) \circ \cdots \circ (b_k,\infty) \\
= & \mu(x) \circ x^{q-2} \circ (x-a_k) \circ x^{q-2} \circ (x-a_{k-1}) \circ \cdots \circ x^{q-2} \circ (x-a_1), 
\end{align*}
where $$\mu(x) := (x+a_1) \circ x^{-1} \circ (x+a_2) \circ x^{-1} \circ \cdots \circ (x+a_k) \circ x^{-1}$$ 
is a degree-one rational function in $\F_q(x)$.
\end{thm}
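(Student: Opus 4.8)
The plan is to obtain Theorem~\ref{second} directly from Theorem~\ref{inverse} and Theorem~\ref{first}, by ``inverting'' the identity established in the latter, so that no fresh induction is needed. Since $F$ and $G$ are inverse bijections of $\F_q^k$ by Theorem~\ref{inverse}, the hypothesis $(a_1,a_2,\dots,a_k) = G(b_1,b_2,\dots,b_k)$ is equivalent to $(b_1,b_2,\dots,b_k) = F(a_1,a_2,\dots,a_k)$. Thus Theorem~\ref{first} applies with exactly these tuples and gives
\begin{align*}
  & x^{q-2} \circ (x-a_k) \circ x^{q-2} \circ (x-a_{k-1}) \circ \cdots \circ x^{q-2} \circ (x-a_1) \\
= & \nu(x) \circ (b_1,\infty) \circ (b_2,\infty) \circ \cdots \circ (b_k,\infty),
\end{align*}
with $\nu(x) = x^{-1} \circ (x-a_k) \circ x^{-1} \circ (x-a_{k-1}) \circ \cdots \circ x^{-1} \circ (x-a_1)$.

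Next I would compose both sides on the left with the compositional inverse $\nu(x)^{-1}$. Since $\nu(x)$ is a degree-one rational function, it induces a bijection of $\bP^1(\F_q)$ whose inverse is again induced by a degree-one rational function; concretely, that inverse is computed from $(f_1 \circ \cdots \circ f_m)^{-1} = f_m^{-1} \circ \cdots \circ f_1^{-1}$, using $(x^{-1})^{-1} = x^{-1}$ and $(x-a)^{-1} = x+a$. Reversing the list of $2k$ degree-one factors of $\nu$ and inverting each one produces exactly $(x+a_1) \circ x^{-1} \circ (x+a_2) \circ x^{-1} \circ \cdots \circ (x+a_k) \circ x^{-1}$, which is the function $\mu(x)$ in the statement. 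Composing with $\nu(x)^{-1} = \mu(x)$ on the left of the displayed identity then rearranges it into
\begin{align*}
  & (b_1,\infty) \circ (b_2,\infty) \circ \cdots \circ (b_k,\infty) \\
= & \mu(x) \circ x^{q-2} \circ (x-a_k) \circ x^{q-2} \circ (x-a_{k-1}) \circ \cdots \circ x^{q-2} \circ (x-a_1),
\end{align*}
which is the desired identity, and $\mu(x)$ is a degree-one rational function, being a composition of such.

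I do not expect a genuine conceptual obstacle here: once Theorem~\ref{inverse} and Theorem~\ref{first} are available, the argument is essentially a one-line consequence of them. The only point demanding care is the bookkeeping in the identification $\nu(x)^{-1} = \mu(x)$: one must keep straight the two genuinely distinct permutations $x^{-1}$ (the rational map $1/x$, which interchanges $0$ and $\infty$) and $x^{q-2}$ (which fixes both $0$ and $\infty$), and must track the index reversal $a_k, a_{k-1}, \dots, a_1 \leftrightarrow a_1, a_2, \dots, a_k$ correctly when passing to the inverse composition. As an alternative one could run a direct induction on $k$ in the spirit of the proof of Theorem~\ref{first}, repeatedly invoking Corollary~\ref{2-cycle} and Corollary~\ref{commute} to migrate the factors $(b_i,\infty)$ to the left past the algebraic factors; but the deduction via Theorem~\ref{inverse} and Theorem~\ref{first} is shorter and more transparent.
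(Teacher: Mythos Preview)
Your argument is correct and takes a genuinely different route from the paper. The paper proves Theorem~\ref{second} by a direct induction on $k$ parallel to its proof of Theorem~\ref{first}: it expands $(b_1,\infty)$ via Corollary~\ref{2-cycle}, then repeatedly uses Corollary~\ref{commute} to push each remaining $(b_{i+1},\infty)$ leftward past $x^{q-2}\circ(x-b_1)$, turning it into $(d_{i,1},\infty)$, and finally applies the inductive hypothesis to the shorter product $(d_{1,1},\infty)\circ\cdots\circ(d_{k-1,1},\infty)$. You instead observe that, since Theorem~\ref{inverse} is proved independently of Theorems~\ref{first} and~\ref{second}, the hypothesis $(a_i)=G(b_i)$ is equivalent to $(b_i)=F(a_i)$, so Theorem~\ref{first} already furnishes the required identity and one need only verify $\nu^{-1}=\mu$, which is immediate bookkeeping. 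Your deduction is shorter and makes transparent that Theorem~\ref{second} carries no content beyond Theorems~\ref{inverse} and~\ref{first} combined. The paper's direct induction, by contrast, establishes the recipe $G$ on its own terms without invoking Theorem~\ref{inverse}; this independence is what allows the paper to present $\mathcal{F}$ and $\mathcal{G}$ as two separately constructed maps and only afterward appeal to Theorem~\ref{inverse} to conclude they are mutual inverses, rather than having one direction be a formal consequence of the other.
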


\begin{proof}
Let us prove it by induction on $k$. The base case $k = 1$ says 
$$(b_1,\infty) = (x+a_1) \circ x^{-1} \circ x^{q-2} \circ (x-a_1)$$ 
where $a_1 = G(b_1) = b_1$, which is true by Corollary~\ref{2-cycle}. For the inductive step, assume that $k>1$ and $b_1, b_2, \dots, b_k$ 
are in $\F_q$. Write $$\Phi^{i-1}(b_1, b_2, \dots, b_k) = (d_{1,i-1}, d_{2,i-1}, \dots, d_{k-i+1,i-1})$$ for $1\le i\le k$, so that 
$$(a_1, a_2, \dots, a_k) = G(b_1, b_2, \dots, b_k) = (d_{1,0}, d_{1,1}, \dots, d_{1,k-1}).$$ For ease of expression, let us define 
$$\gamma_i := (x+b_1) \circ x^{-1} \circ (d_{1,1},\infty) \circ (d_{2,1},\infty) \circ \cdots \circ (d_{i-1,1},\infty) \circ x^{q-2} \circ (x-b_1)$$ 
for $1 \le i \le k$, and define $$\theta_i := (b_{i+1},\infty) \circ (b_{i+2},\infty) \circ \dots \circ (b_k,\infty)$$ for $0 \le i \le k$, 
where by convention $\gamma_1 = (x+b_1) \circ x^{-1} \circ x^{q-2} \circ (x-b_1)$ 
and $\theta_k = x$. Denote $$\tau := (b_1,\infty) \circ (b_2,\infty) \circ \cdots \circ (b_k,\infty).$$ Hence by Corollary~\ref{2-cycle} we have 
$$\tau = \theta_0 = (b_1,\infty) \circ \theta_1 = (x+b_1) \circ x^{-1} \circ x^{q-2} \circ (x-b_1) \circ \theta_1 = \gamma_1 \circ \theta_1.$$ 
We claim that $\gamma_i \circ \theta_i = \gamma_{i+1} 
\circ \theta_{i+1}$ for $1\le i\le k-1$. Indeed, for each $i$ with $1\le i\le k-1$, we have 
\begin{align*}
  & x^{q-2} \circ (x-b_1) \circ \theta_i \\
= & x^{q-2} \circ (x-b_1) \circ (b_{i+1},\infty) \circ \theta_{i+1} \\
= & x^{q-2} \circ (b_{i+1}-b_1,\infty) \circ (x-b_1) \circ \theta_{i+1} \\
= & (d_{i,1},\infty) \circ x^{q-2} \circ (x-b_1) \circ \theta_{i+1}. 
\end{align*}
Thus by prepending $$(x+b_1) \circ x^{-1} \circ (d_{1,1},\infty) \circ (d_{2,1},\infty) \circ \cdots \circ (d_{i-1,1},\infty)$$ to these
permutations, we get 
\begin{align*}
\gamma_i \circ \theta_i = & (x+b_1) \circ x^{-1} \circ (d_{1,1},\infty) \circ (d_{2,1},\infty) \circ \cdots \\
  & \circ (d_{i-1,1},\infty) \circ x^{q-2} \circ (x-b_1) \circ \theta_i \\
= & (x+b_1) \circ x^{-1} \circ (d_{1,1},\infty) \circ (d_{2,1},\infty) \circ \cdots \\
  & \circ (d_{i-1,1},\infty) \circ (d_{i,1},\infty) \circ x^{q-2} \circ (x-b_1) \circ \theta_{i+1} \\
= & \gamma_{i+1} \circ \theta_{i+1}, 
\end{align*}
which concludes the proof of the claim. So we have 
$$\tau = \gamma_1 \circ \theta_1 = \gamma_2 \circ \theta_2 = \cdots = \gamma_k \circ \theta_k$$ which by definition is equal to
$$(x+b_1) \circ x^{-1} \circ (d_{1,1},\infty) \circ (d_{2,1},\infty) \circ \cdots \circ (d_{k-1,1},\infty) \circ x^{q-2} \circ (x-b_1).$$ 
By inductive hypothesis, if we denote $$(\widehat{a_1}, \widehat{a_2}, \dots, \widehat{a_{k-1}}) := G(d_{1,1}, d_{2,1}, \dots, d_{k-1,1}),$$ 
then 
\begin{align*}
  & (d_{1,1},\infty) \circ (d_{2,1},\infty) \circ \cdots \circ (d_{k-1,1},\infty) \\
= & (x+\widehat{a_1}) \circ x^{-1} \circ (x+\widehat{a_2}) \circ x^{-1} \circ \cdots \circ (x+\widehat{a_{k-1}}) \circ x^{-1} \circ \\
  & \circ x^{q-2} \circ (x-\widehat{a_{k-1}}) \circ x^{q-2} \circ (x-\widehat{a_{k-2}}) \circ \cdots \circ x^{q-2} \circ (x-\widehat{a_1}). 
\end{align*}
Note that for $1\le i\le k-1$ we have $\widehat{a_i} = a_{i+1}$ since $$\Phi^{i-1}(d_{1,1}, d_{2,1}, \dots, d_{k-1,1}) = \Phi^i(b_1, b_2, \dots, b_k).$$ 
Therefore, we have
\begin{align*}
\tau = & (x+b_1) \circ x^{-1} \circ (x+a_2) \circ x^{-1} \circ (x+a_3) \circ x^{-1} \circ \cdots \\
       & \circ (x+a_k) \circ x^{-1} \circ x^{q-2} \circ (x-a_k) \circ x^{q-2} \circ (x-a_{k-1}) \circ \cdots \\
       & \circ x^{q-2} \circ (x-a_2) \circ x^{q-2} \circ (x-b_1),
\end{align*}
which concludes the induction and the proof since $b_1 = a_1$. 
\end{proof}

We remark that Theorem~\ref{first} gives a recipe for turning an algebraical presentation of a permutation on $\bP^1(\F_q)$ of the form 
$$\mu(x) \circ x^{q-2} \circ (x-a_k) \circ x^{q-2} \circ (x-a_{k-1}) \circ \cdots \circ x^{q-2} \circ (x-a_1)$$ into a combinatorial presentation of the same 
permutation of the form 
$$\nu(x) \circ (b_1, \infty) \circ (b_2,\infty) \circ \cdots \circ (b_k,\infty),$$ 
and conversely Theorem~\ref{second} gives a recipe for turning a combinatorial presentation of a permutation on $\bP^1(\F_q)$ of the form 
$$\nu(x) \circ (b_1, \infty) \circ (b_2,\infty) \circ \cdots \circ (b_k,\infty)$$ 
into an algebraical presentation of the same permutation of the form
$$\mu(x) \circ x^{q-2} \circ (x-a_k) \circ x^{q-2} \circ (x-a_{k-1}) \circ \cdots \circ x^{q-2} \circ (x-a_1),$$ 
where $a_1, a_2 \dots, a_k, b_1, b_2 \dots, b_k$ are elements in $\F_q$ and $\mu(x), \nu(x)$ are degree-one rational functions in $\F_q(x)$. 

In other words, for any fixed permutation $\sigma$ of\/ $\bP^1(\F_q)$, there are two natural maps $\mathcal{F} : \mathcal{A}_{\sigma,k} \to \mathcal{C}_{\sigma,k}$
and $\mathcal{G} : \mathcal{C}_{\sigma,k} \to \mathcal{A}_{\sigma,k}$ induced by the recipes illustrated in Theorem~\ref{first} and Theorem~\ref{second} respectively, 
where $\mathcal{A}_{\sigma,k}$ is the set of all representations of $\sigma$ of the form 
$$\sigma = \mu(x) \circ x^{q-2} \circ (x-a_k) \circ x^{q-2} \circ (x-a_{k-1}) \circ \cdots \circ x^{q-2} \circ (x-a_1)$$ 
with $a_1, a_2, \dots, a_k \in \F_q$ and $\mu(x) \in \F_q(x)$ of degree one, and $\mathcal{C}_{\sigma,k}$ is the set of all representations of $\sigma$ 
of the form $$\sigma = \nu(x) \circ (b_1,\infty) \circ (b_2,\infty) \circ \cdots \circ (b_k,\infty)$$ 
with $b_1, b_2, \dots, b_k \in \F_q$ and $\nu(x) \in \F_q(x)$ of degree one.

Now we are ready to prove Theorem~\ref{bijection}, which asserts the natural maps $\mathcal{F} : \mathcal{A}_{\sigma,k} \to \mathcal{C}_{\sigma,k}$
and $\mathcal{G} : \mathcal{C}_{\sigma,k} \to \mathcal{A}_{\sigma,k}$ induced by the recipes in Theorem~\ref{first} and Theorem~\ref{second} respectively 
are inverses to one another.

\begin{proof}[Proof of Theorem~\ref{bijection}]
It suffices to show that both $\mathcal{G} \circ \mathcal{F}$ and $\mathcal{F} \circ \mathcal{G}$ are the identity maps, where 
$\mathcal{F} : \mathcal{A}_{\sigma,k} \to \mathcal{C}_{\sigma,k}$ and $\mathcal{G} : \mathcal{C}_{\sigma,k} \to \mathcal{A}_{\sigma,k}$ are the natural maps 
induced by Theorem~\ref{first} and Theorem~\ref{second} respectively.

Given any algebraic representation of $\sigma$ in the set $\mathcal{A}_{\sigma,k}$ of the form 
$$\sigma = \mu(x) \circ x^{q-2} \circ (x-a_k) \circ x^{q-2} \circ (x-a_{k-1}) \circ \cdots \circ x^{q-2} \circ (x-a_1)$$ 
with $a_1, a_2, \dots, a_k \in \F_q$ and $\mu(x) \in \F_q(x)$ of degree one, by definition $\mathcal{F}$ sends it to a combinatorial representation 
of $\sigma$ in $\mathcal{C}_{\sigma,k}$ of the form $$\sigma = \nu(x) \circ (b_1,\infty) \circ (b_2,\infty) \circ \cdots \circ (b_k,\infty)$$ 
with $b_1, b_2, \dots, b_k \in \F_q$ and $\nu(x) \in \F_q(x)$ of degree one, in which $$(b_1, b_2, \cdots, b_k) := F(a_1, a_2, \cdots, a_k),$$
$$\nu_0(x) := x^{-1} \circ (x-a_k) \circ x^{-1} \circ (x-a_{k-1}) \circ \cdots \circ x^{-1} \circ (x-a_1),$$ and $\nu(x) := \mu(x) \circ \nu_0(x)$. 
This combinatorial representation is sent by $\mathcal{G}$ to another algebraic representation of $\sigma$ in $\mathcal{A}_{\sigma,k}$ of the form
$$\sigma = \mu^*(x) \circ x^{q-2} \circ (x-a_k^*) \circ x^{q-2} \circ (x-a_{k-1}^*) \circ \cdots \circ x^{q-2} \circ (x-a_1^*)$$ 
with $a_1^*, a_2^*, \dots, a_k^* \in \F_q$ and $\mu^*(x) \in \F_q(x)$ of degree one, in which $$(a_1^*, a_2^*, \dots, a_k^*) := G(b_1, b_2, \dots, b_k),$$ 
$$\mu_0(x) := (x+a_1^*) \circ x^{-1} \circ (x+a_2^*) \circ x^{-1} \circ \cdots \circ (x+a_k^*) \circ x^{-1},$$ and $\mu^*(x) := \nu(x) \circ \mu_0(x)$.
By Theorem~\ref{inverse}, the $k$-tuple 
$$(a_1^*, a_2^*, \dots, a_k^*) = G(b_1, b_2, \dots, b_k) = G\circ F (a_1, a_2, \dots, a_k)$$ equals $(a_1, a_2, \dots, a_k)$, i.e., we have $a_i^* = a_i$ 
for any $1\le i\le k$. Thus $\mu_0(x)$ and $\nu_0(x)$ are inverse to one another, which implies that 
$$\mu^*(x) = \nu(x) \circ \mu_0(x) = \mu(x) \circ \nu_0(x) \circ \mu_0(x) = \mu(x).$$ Hence, $\mathcal{G} \circ \mathcal{F}$ is the identity map 
on the set $\mathcal{A}_{\sigma,k}$. Similarly, we can show that $\mathcal{F} \circ \mathcal{G}$ is the identity map on $\mathcal{C}_{\sigma,k}$. 
Therefore, the natural maps $\mathcal{F} : \mathcal{A}_{\sigma,k} \to \mathcal{C}_{\sigma,k}$ and $\mathcal{G} : \mathcal{C}_{\sigma,k} \to \mathcal{A}_{\sigma,k}$ 
induced by Theorem~\ref{first} and Theorem~\ref{second} respectively are inverse bijections.
\end{proof}

\section{Consequences}

Note that any permutation of\/ $\F_q$ can be extended uniquely to a permutation of\/ $\bP^1(\F_q)$. 
Hence permutations of\/ $\F_q$ can be regarded as permutations of\/ $\bP^1(\F_q)$ in this sense. By application of Theorem~\ref{bijection} to 
permutations of\/ $\F_q$, we get the following result:

\begin{cor}\label{F_q}
Suppose $f$ is a permutation of $\F_q$, and then extends $f$ to a permutation of $\bP^1(\F_q)$ which fixes $\infty$. Then in any representation 
$$f = \mu(x) \circ x^{q-2} \circ (x-a_k) \circ x^{q-2} \circ (x-a_{k-1}) \circ \cdots \circ x^{q-2} \circ (x-a_1)$$ 
with $a_1, a_2, \dots, a_k \in \F_q$ and $\mu(x) \in \F_q(x)$ of degree one, the rational function $\mu(x)$ must be a degree-one polynomial in $\F_q[x]$. 
Therefore, the set of all representations of $f$ in the form of 
$$\mu(x) \circ x^{q-2} \circ (x-a_k) \circ x^{q-2} \circ (x-a_{k-1}) \circ \cdots \circ x^{q-2} \circ (x-a_1)$$ 
with $a_1, a_2, \dots, a_k \in \F_q$ and $\mu(x) \in \F_q[x]$ of degree one is naturally bijective to the set of all representations of $f$ in the form of 
$$\nu(x) \circ (b_1,\infty) \circ (b_2,\infty) \circ \cdots \circ (b_k,\infty)$$ 
with $b_1, b_2, \dots, b_k \in \F_q$ and $\nu(x) \in \F_q(x)$ of degree one. 
\end{cor}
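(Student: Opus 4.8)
The plan is to show that the stated hypothesis forces the rational function $\mu(x)$ to fix the point $\infty$; once that is established, both assertions of the corollary drop out immediately.

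First I would record the key observation that the monomial $x^{q-2}$, viewed as a permutation of $\bP^1(\F_q)$, fixes $\infty$. Indeed $x^{q-2}$ is a polynomial of degree $q-2\ge 1$, so it fixes $\infty$; consistently with Lemma~\ref{basic}, it acts on $\bP^1(\F_q)$ by fixing $0$ and $\infty$ and sending each $x\in\F_q^*$ to $x^{-1}$. Each translation $x-a_i$ likewise fixes $\infty$, so the composition
$$g:=x^{q-2}\circ(x-a_k)\circ x^{q-2}\circ(x-a_{k-1})\circ\cdots\circ x^{q-2}\circ(x-a_1)$$
fixes $\infty$ as well. Writing the given representation as $f=\mu(x)\circ g$ and using the hypothesis that $f$ fixes $\infty$, we get $\mu(\infty)=\mu\bigl(g(\infty)\bigr)=f(\infty)=\infty$. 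A degree-one rational function $(\alpha x+\beta)/(\gamma x+\delta)\in\F_q(x)$ fixes $\infty$ exactly when $\gamma=0$, i.e.\ exactly when it is a degree-one polynomial in $\F_q[x]$; hence $\mu(x)\in\F_q[x]$, which is the first assertion.

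For the second assertion I would note that the first assertion says precisely that the set of representations of $f$ of the stated shape with $\mu(x)\in\F_q(x)$ of degree one coincides with the subset of those with $\mu(x)\in\F_q[x]$ of degree one (the reverse inclusion being trivial); in other words, this set is exactly the set $\mathcal{A}_{f,k}$ attached to the permutation $\sigma=f$ of $\bP^1(\F_q)$. Applying Theorem~\ref{bijection} with $\sigma=f$ then yields a natural bijection between this set and $\mathcal{C}_{f,k}$, namely the set of all representations $f=\nu(x)\circ(b_1,\infty)\circ(b_2,\infty)\circ\cdots\circ(b_k,\infty)$ with $b_1,\dots,b_k\in\F_q$ and $\nu(x)\in\F_q(x)$ of degree one. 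I would also remark in passing that, unlike $\mu(x)$, the function $\nu(x)$ cannot in general be taken to be a polynomial, since the product of $2$-cycles $(b_1,\infty)\circ\cdots\circ(b_k,\infty)$ need not fix $\infty$.

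There is no serious obstacle here: the entire content is the bookkeeping remark that $x^{q-2}$ and the translations all fix $\infty$, so the ``polynomial part'' $\mu(x)$ of an algebraic representation of a permutation fixing $\infty$ must itself fix $\infty$. The only point requiring a little care is to invoke the paper's convention for the action of $x^{q-2}$ at $0$ and $\infty$, which has already been pinned down by Lemma~\ref{basic}.
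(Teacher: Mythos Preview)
Your proposal is correct and follows essentially the same argument as the paper: both observe that $f$, $x^{q-2}$, and each $x-a_i$ fix $\infty$, deduce that $\mu$ fixes $\infty$ and hence is a polynomial, and then invoke Theorem~\ref{bijection} for the second assertion. Your write-up is a bit more explicit (spelling out the $\gamma=0$ condition and the equality of the two representation sets), but there is no substantive difference in strategy.
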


\begin{proof}
Note that the degree-one rational function $\mu(x) \in \F_q(x)$ in any representation of the extended permutation $f$ on $\bP^1(\F_q)$ in the form of
$$f = \mu(x) \circ x^{q-2} \circ (x-a_k) \circ x^{q-2} \circ (x-a_{k-1}) \circ \cdots \circ x^{q-2} \circ (x-a_1)$$ 
fixes the point $\infty$, since all of $f$, $x^{q-2}$, and $(x-a_i)$ fix $\infty$. Hence the first assertion follows from the fact that the degree-one 
rational functions which fix $\infty$ are precisely the degree-one polynomials. The second assertion is then a direct consequence of Theorem~\ref{bijection}.
\end{proof}

The following is a classical result on permutations of $\F_q$, which was posed as a question by Straus, and was first proved by Carlitz \cite{Carlitz} 
and then proved again by Zieve \cite{Z}: 

\begin{thm}\label{Carlitz}
If $q>2$ is a prime power, then every permutation of\/ $\F_q$ is a composition of $x^{q-2}$ and degree-one polynomials over\/ $\F_q$.
\end{thm}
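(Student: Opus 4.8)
The plan is to deduce Theorem~\ref{Carlitz} from the combinatorial side of the bijection, since compositions of $2$-cycles are easy to control. First I would observe that, by Corollary~\ref{F_q} (or simply by unwinding the recipe in Theorem~\ref{second}), it suffices to prove the purely combinatorial statement: every permutation $f$ of $\F_q$, viewed as a permutation of $\bP^1(\F_q)$ fixing $\infty$, can be written as
$$f = \nu(x) \circ (b_1,\infty) \circ (b_2,\infty) \circ \cdots \circ (b_k,\infty)$$
for some $k$, some $b_1,\dots,b_k \in \F_q$, and some degree-one $\nu(x) \in \F_q(x)$. Indeed, if we can do this, then applying $\mathcal{G}$ (Theorem~\ref{second}) converts this into an algebraic representation $\mu(x)\circ x^{q-2}\circ(x-a_k)\circ\cdots\circ x^{q-2}\circ(x-a_1)$; and since $f$, $x^{q-2}$, and the $x-a_i$ all fix $\infty$, the degree-one rational function $\mu(x)$ fixes $\infty$, hence is a degree-one polynomial, so $f$ is exhibited as a composition of copies of $x^{q-2}$ and degree-one polynomials.

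The key step is therefore the combinatorial claim, which I would prove by a sorting-network / bubble-sort argument on $\bP^1(\F_q)$. The transpositions $(b,\infty)$ with $b \in \F_q$ generate the full symmetric group on $\bP^1(\F_q)$: for any two distinct $b,c \in \F_q$ one has $(b,c) = (b,\infty)\circ(c,\infty)\circ(b,\infty)$, and these $2$-element transpositions generate the symmetric group on the finite set $\bP^1(\F_q)$ (which has $q+1 \ge 4$ elements). Hence an arbitrary permutation $\sigma$ of $\bP^1(\F_q)$ — in particular the given $f$ — is a product of $2$-cycles each moving $\infty$; writing $\sigma = (b_1,\infty)\circ\cdots\circ(b_k,\infty)$ and taking $\nu(x) = x$ (which is degree-one) gives a representation in $\mathcal{C}_{\sigma,k}$. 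So actually $\mathcal{C}_{\sigma,k}$ is nonempty for $k$ large enough, and by Theorem~\ref{bijection} so is $\mathcal{A}_{\sigma,k}$; specializing to $\sigma = f$ and invoking the $\infty$-fixing argument above finishes the proof. One small point to handle: the representation produced this way for $f$ might a priori have $\nu \ne x$, but that is irrelevant — we only need $\mathcal{A}_{f,k} \neq \emptyset$ together with the observation that the leading factor $\mu(x)$ is automatically a polynomial, and then unwind $\mu(x) = (x-c)$ or the like into a genuine composition of degree-one polynomials with copies of $x^{q-2}$.

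The main obstacle — or rather the one place where something genuinely needs checking rather than being automatic — is the nonemptiness of $\mathcal{A}_{\sigma,k}$ (equivalently $\mathcal{C}_{\sigma,k}$) for \emph{some} $k$: the bijection of Theorem~\ref{bijection} is only asserted for a fixed $k$ and says the two sets have equal cardinality, so it alone does not produce any representation out of thin air. But this is exactly where the combinatorial side earns its keep: unlike the algebraic form, whose existence is the content of Carlitz's theorem, the combinatorial form is elementary because the transpositions $(b,\infty)$ visibly generate $\mathrm{Sym}(\bP^1(\F_q))$. Thus the whole content of Carlitz's theorem gets repackaged as the trivial fact that a finite symmetric group is generated by transpositions through a fixed point, transported across the bijection; the condition $q > 2$ enters precisely to guarantee $\lvert \bP^1(\F_q)\rvert \ge 4 > 3$, so that conjugation identities like $(b,c) = (b,\infty)\circ(c,\infty)\circ(b,\infty)$ and the resulting generation statement are available, and also to ensure $x^{q-2}$ and the relevant degree-one maps behave as claimed in the lemmas of Section~2.
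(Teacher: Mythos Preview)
Your proposal is correct and follows essentially the same route as the paper: exhibit a combinatorial representation of $f$ as a product of transpositions $(b_i,\infty)$ (the paper does this via the explicit cycle decomposition $(b_{i1},\ldots,b_{is_i}) = (b_{i1},\infty)\circ(b_{is_i},\infty)\circ\cdots\circ(b_{i1},\infty)$, while you invoke the equivalent fact that transpositions through a fixed point generate the symmetric group), then transport across the bijection via Theorem~\ref{second}/Corollary~\ref{F_q} and note that $\mu$ must fix $\infty$ and hence be a polynomial. One small inaccuracy: the generation of $\mathrm{Sym}(\bP^1(\F_q))$ by the $(b,\infty)$ already holds for $q=2$, so the hypothesis $q>2$ is not needed there but rather to ensure $x^{q-2}$ is a permutation and the Section~2 identities make sense.
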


Inspired by Theorem~\ref{Carlitz}, Aksoy et al.\ \cite{ACMT} introduced the notion of the \emph{Carlitz rank} of a permutation $f$ of $\F_q$, 
which means the smallest value of $n\ge 0$ in any representation of $f$ of the form 
$$f = \theta_0 \circ x^{q-2} \circ \theta_1 \circ x^{q-2} \circ \dots \circ x^{q-2} \circ \theta_n$$
in which each $\theta_i$ is a degree-one polynomial in $\F_q[x]$. For a systematic study of the general theory of Carlitz ranks and their 
generalizations, see \cite{DZcr}. Carlitz ranks have been extensively studied in the literature, see for example the papers 
\cite{ACMT,Ces,CMTcyc,CMTenum,CMTperm,GOT,I,IT,ITW,MT,T}. For the history of permutation polynomials over a finite field, see the papers
\cite{Betti,Carlitz,Carlitz2, Carlitz3,Dicksonthesis,Fryer1,Fryer2,LN,W}.

As the first consequence, our results provide a new perspective on the theory of Carlitz ranks from the point of view of combinatorics, 
instead of from the point of view of algebra as in the literature previously. 

More precisely, by Corollary~\ref{F_q} 
the Carlitz rank of a permutation $f$ of\/ $\F_q$ is exactly the smallest integer $n\ge 0$ for which the uniquely extended permutation
of\/ $\bP^1(\F_q)$, which is also denoted by $f$ by abuse of language, admits a combinatorial representation of the form
$$f = \nu(x) \circ (b_1,\infty) \circ (b_2,\infty) \circ \cdots \circ (b_n,\infty)$$ for some $b_1, b_2, \dots, b_n \in \F_q$ and 
some $\nu(x) \in \F_q(x)$ of degree one. 

Based on this new characterization of Carlitz rank, the following result on the computation of Carlitz rank has be obtained in \cite{DZcr}:

\begin{thm}\label{Crank}
Let $f=\mu\circ\sigma$ be a permutation of\/ $\F_q$, where $\mu$ is a degree-one rational function, and $\sigma$ is a permutation of\/ $\bP^1(\F_q)$ 
which moves $s$ points in\/ $\F_q$ and has $t$ nontrivial orbits. Define $n:=s+t$ if $\sigma(\infty)=\infty$ and $n:=s+t-1$ if $\sigma(\infty)\neq \infty$.
Then the Carlitz rank of $f$ is at most $n$, and it equals $n$ if in addition $q\geq n+s+2$.
\end{thm}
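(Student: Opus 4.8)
The plan is to work entirely on the combinatorial side of the dictionary. By the characterization recorded just after Corollary~\ref{F_q}, the Carlitz rank of $f=\mu\circ\sigma$ is the least integer $\ell\ge 0$ for which $f$ can be written as $f=\nu(x)\circ(b_1,\infty)\circ\cdots\circ(b_\ell,\infty)$ with $\nu\in\F_q(x)$ of degree one and $b_1,\dots,b_\ell\in\F_q$. Since $\mu$ is itself a degree-one rational function, such a representation exists precisely when $\rho^{-1}\circ\sigma=(b_1,\infty)\circ\cdots\circ(b_\ell,\infty)$ for some degree-one rational function $\rho$ (take $\rho:=\mu^{-1}\circ\nu$). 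So, writing $\mathrm{st}(\pi)$ for the least number of transpositions of the form $(b,\infty)$ with $b\in\F_q$ whose composition is a given permutation $\pi$ of $\bP^1(\F_q)$, the Carlitz rank of $f$ equals $\min_\rho\mathrm{st}(\rho^{-1}\circ\sigma)$, the minimum over all degree-one rational $\rho$. I will use the formula
\[
\mathrm{st}(\pi)=m(\pi)+c(\pi)-1+\epsilon(\pi),
\]
where $m(\pi)$ is the number of points of $\F_q$ moved by $\pi$, $c(\pi)$ the number of nontrivial cycles of $\pi$, and $\epsilon(\pi):=1$ if $\pi(\infty)=\infty$ and $\epsilon(\pi):=0$ otherwise; with $\pi=\sigma$ this reads $\mathrm{st}(\sigma)=n$.

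For the bound ``Carlitz rank $\le n$'' I take $\rho=\mathrm{id}$ (that is, $\nu=\mu$), so it suffices to factor $\sigma$ into $n$ transpositions through $\infty$. Decompose $\sigma$ into its disjoint nontrivial cycles. A cycle $(c_1\,c_2\,\cdots\,c_\ell)$ that does not move $\infty$ equals $(c_1,\infty)\circ(c_\ell,\infty)\circ(c_{\ell-1},\infty)\circ\cdots\circ(c_2,\infty)\circ(c_1,\infty)$, a product of $\ell+1$ such transpositions, while the cycle $(\infty\,c_1\,\cdots\,c_{\ell_0})$ that moves $\infty$ equals $(c_{\ell_0},\infty)\circ(c_{\ell_0-1},\infty)\circ\cdots\circ(c_1,\infty)$, a product of $\ell_0$ of them; both identities follow at once from Lemma~\ref{trivial}. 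Concatenating these factorizations over all nontrivial cycles of $\sigma$ (which commute) expresses $\sigma$ as a composition of $\sum(\ell_i+1)=s+t$ transpositions if $\sigma(\infty)=\infty$, and of $\ell_0+\sum_{i\ge 1}(\ell_i+1)=s+t-1$ transpositions if $\sigma(\infty)\neq\infty$; in both cases the count is $n$. Prepending $\mu$ yields a representation of $f$ of length $n$, so the Carlitz rank of $f$ is at most $n$.

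For the reverse inequality assume $q\ge n+s+2$, and let $f=\nu\circ(b_1,\infty)\circ\cdots\circ(b_\ell,\infty)$ be any representation of the prescribed form; set $\rho:=\mu^{-1}\circ\nu$, so that $\rho^{-1}\circ\sigma=(b_1,\infty)\circ\cdots\circ(b_\ell,\infty)$. If $\rho\neq\mathrm{id}$, then $\rho$ is a non-identity degree-one rational function and hence fixes at most two points of $\bP^1(\F_q)$; among the $q-s$ points of $\F_q$ fixed by $\sigma$, therefore at least $q-s-2$ are not fixed by $\rho$, and at each such point $P$ we have $\sigma(P)=P\neq\rho(P)$, so $P$ is moved by $\rho^{-1}\circ\sigma$. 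Thus $\rho^{-1}\circ\sigma$ moves at least $q-s-2$ points of $\F_q$; but a composition of $\ell$ transpositions $(b_i,\infty)$ has support inside $\{b_1,\dots,b_\ell,\infty\}$, hence moves at most $\ell$ points of $\F_q$, so $\ell\ge q-s-2\ge n$. If instead $\rho=\mathrm{id}$, then $\sigma=(b_1,\infty)\circ\cdots\circ(b_\ell,\infty)$, whence $\ell\ge\mathrm{st}(\sigma)=n$ by the displayed formula. Either way $\ell\ge n$, so together with the previous paragraph the Carlitz rank of $f$ equals $n$.

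The one part that requires real work is the inequality $\mathrm{st}(\sigma)\ge n$ used in the case $\rho=\mathrm{id}$, equivalently the ``$\ge$'' half of the formula for $\mathrm{st}$. I will prove it with a potential function: put $\phi(\pi):=m(\pi)+c(\pi)-1+\epsilon(\pi)$, so that $\phi(\mathrm{id})=0$ and $\phi(\sigma)=n$, and show that composing $\pi$ with a single transposition $(b,\infty)$ changes $\phi$ by exactly $\pm 1$. This is a finite case check: either $b$ and $\infty$ lie in the same cycle of $\pi$, in which case that cycle splits in two, or they lie in different cycles, in which case those two cycles merge; inside each case one further distinguishes whether $b$, respectively $\infty$, is a fixed point of $\pi$, and in each of the resulting configurations one reads off directly how $m$, $c$ and $\epsilon$ change. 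Granting this, for any factorization of $\sigma$ into $\ell$ transpositions through $\infty$ one gets $n=\phi(\sigma)\le\phi(\mathrm{id})+\ell=\ell$, as needed. I expect this case analysis to be the main obstacle; the rest of the argument uses only the dictionary between the two kinds of representations together with the elementary fact that a non-identity degree-one rational function has at most two fixed points on $\bP^1(\F_q)$.
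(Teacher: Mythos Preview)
The paper does not actually prove Theorem~\ref{Crank}; it is only quoted from the companion paper \cite{DZcr}, so there is no in-paper argument to compare against. Your proof is correct and self-contained.

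Your upper bound is exactly the construction the paper sketches in Section~4: decompose $\sigma$ into disjoint nontrivial cycles and write each as a product of transpositions through $\infty$, giving $s+t$ factors if $\sigma$ fixes $\infty$ and $s+t-1$ otherwise. For the lower bound, the dichotomy on $\rho=\mu^{-1}\circ\nu$ is clean: when $\rho\ne\mathrm{id}$, the fact that a non-identity degree-one rational function has at most two fixed points on $\bP^1(\F_q)$ forces $\rho^{-1}\circ\sigma$ to move at least $q-s-2\ge n$ points of $\F_q$, hence $\ell\ge n$; when $\rho=\mathrm{id}$, you reduce to showing $\mathrm{st}(\sigma)\ge n$, and your potential function $\phi(\pi)=m(\pi)+c(\pi)-1+\epsilon(\pi)$ does change by exactly $\pm1$ under multiplication by any $(b,\infty)$, so $n=\phi(\sigma)\le\ell$.

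One optional simplification of the case check: rewrite $\phi(\pi)=2m(\pi)+C(\pi)-(q+1)$, where $C(\pi)$ is the total number of cycles of $\pi$ on $\bP^1(\F_q)$ including fixed points. Then $\Delta C=\pm1$ always, and only the status of $b$ can change $m$. If $\Delta m=+1$ then $b$ was a fixed point of $\pi$, so $b$ and $\infty$ lay in different cycles and $\Delta C=-1$; if $\Delta m=-1$ then $\pi'(b)=\pi(\infty)=b$, so $b$ and $\infty$ lay in the same cycle of $\pi$ and $\Delta C=+1$. Either way $2\Delta m+\Delta C=\pm1$, and when $\Delta m=0$ this is immediate. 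This replaces the full merge/split case analysis by two one-line observations.
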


As the second consequence, our results provide new perspective to Theorem~\ref{Carlitz}. Indeed, Theorem~\ref{Carlitz} follows 
easily from Corollary~\ref{F_q} in various ways, since any permutation $f$ of\/ $\F_q$ has many different ways to be represented as 
$$f = (b_1,\infty) \circ (b_2,\infty) \circ \cdots \circ (b_k,\infty)$$ for some $k\ge 1$ and some $b_1, b_2, \dots, b_k \in \F_q$, 
and by Corollary~\ref{F_q} any such combinatorial representation of $f$ gives rise to an algebraic representation of $f$ as 
a composition of $k$ copies of $x^{q-2}$ and degree-one polynomials over\/ $\F_q$, which concludes our proof for Theorem~\ref{Carlitz}. 

More precisely, given any nontrivial permutation $f$ of\/ $\F_q$, we can write $f$ as the product of disjoint cycles of length at least two 
as follows: 
$$f = (b_{11},b_{12},\dots,b_{1s_1}) (b_{21},b_{22},\dots,b_{2s_2})\cdots (b_{t1},b_{t2},\dots,b_{ts_t}),$$
where $t\ge 1$ and each $s_i\ge 2$ with $1\le i\le t$. Note that 
$$(b_{i1},b_{i2},\dots,b_{is_i}) = (b_{i1},\infty) \circ [(b_{is_i},\infty) \circ \cdots \circ (b_{i2},\infty) \circ (b_{i1},\infty)],$$
for any $1\le i\le t$. Hence we get a representation of $f$ of the form
$$f = (b_1,\infty) \circ (b_2,\infty) \circ \cdots \circ (b_n,\infty),$$ where $s:= \Sigma_{i=1}^t s_i$, $k:= s+t$, and denote by 
$(b_1, b_2, \dots, b_k)$ the tuple
$$((b_{11}, b_{1s_1}, \dots, b_{11}), (b_{21}, b_{2s_2}, \dots, b_{21}), \dots, (b_{t1}, b_{ts_t}, \dots, b_{t1})).$$
By Theorem~\ref{second}, if we denote $$(a_1, a_2, \dots, a_k):= G(b_1, b_2, \dots, b_k),$$ then we obtain a representation of $f$ 
of the form $$f = \mu(x) \circ x^{q-2} \circ (x-a_k) \circ x^{q-2} \circ (x-a_{k-1}) \circ \cdots \circ x^{q-2} \circ (x-a_1),$$ 
where $$\mu(x) := (x+a_1) \circ x^{-1} \circ (x+a_2) \circ x^{-1} \circ \cdots \circ (x+a_k) \circ x^{-1} \in \F_q(x)$$ is a rational 
function of degree one. Furthermore, $\mu(x)$ is a degree-one polynomial over $\F_q$ by Corollary~\ref{F_q}. Therefore, we obtain 
an algebraic representation of the given permutation $f$ of\/ $\F_q$ as a composition of $k$ copies of $x^{q-2}$ and $k+1$ degree-one 
polynomials over\/ $\F_q$ as above. Moreover, if in addition $q\ge 2s+t+2$, then this representation of $f$ is optimal in the sense 
that it has the least possible copies of $x^{q-2}$, since in this case the Carlitz rank of $f$ is exactly $k$ by Theorem~\ref{Crank}.

In particular, by application of the above procedure to $2$-cycles of the form $(0,a)$ with $a\in \F_q^*$, we can recover uniformly 
both proofs for Theorem~\ref{Carlitz} in Carlitz \cite{Carlitz} and Zieve \cite{Z}, which involve some clever tricks and have not been 
related to each other previously. 

First, let us review briefly the proofs for Theorem~\ref{Carlitz} in Carlitz \cite{Carlitz} and Zieve \cite{Z}. The starting point for both proofs 
is the same, i.e., it is enough to show the result in the special case that the permutation is a $2$-cycle of the form $(0,a)$ with $a\in \F_q^*$, 
since any permutation of $\F_q$ can be written as a product of such $2$-cycles. But Carlitz \cite{Carlitz} and Zieve \cite{Z} give different 
expressions of the $2$-cycle $(0,a)$ as compositions of $x^{q-2}$ and degree-one polynomials. More precisely, Carlitz \cite{Carlitz} relies 
on the mysterious observation that $$(0,a) = (-a^2x)\circ x^{q-2}\circ (x-a)\circ x^{q-2}\circ (x+1/a)\circ x^{q-2}\circ (x-a),$$
but does not explain how it could be found; Zieve \cite{Z} observes that
$$(0,a) = (-ax+a)\circ x^{q-2}\circ (-x+1)\circ x^{q-2}\circ (-x+1)\circ x^{q-2}\circ (x/a),$$
which follows from the clever combination of Lemma~\ref{basic} and the fact that the degree-one rational function $1-x^{-1}$ induces 
an order-three permutation of\/ $\bP^1(\F_q)$ with a $3$-cycle $(\infty,1,0)$.

Now, we apply our procedure illustrated above to $2$-cycles $(0,a)$ with $a\in \F_q^*$ to recover uniformly both of the above two 
observations in Carlitz \cite{Carlitz} and Zieve \cite{Z} respectively. Indeed, for any $a \in \F_q^*$, there are two equally natural 
ways to express the $2$-cycle $(0,a)$ as a product of some $2$-cycles of the form $(b,\infty)$ with $b\in \F_q$. More precisely, 
by our procedure the $2$-cycle $(0,a)$ can be expressed naturally as either $(0,a) = (a,\infty)\circ (0,\infty)\circ (a,\infty)$ or 
$(0,a) = (0,\infty)\circ (a,\infty)\circ (0,\infty)$. By some computation as explained above, the first identity leads exactly to 
the mysterious observation in Carlitz \cite{Carlitz}, and the second one gives 
$$(0,a) = (-a^2x+a)\circ x^{q-2}\circ (x+a)\circ x^{q-2}\circ (x-1/a)\circ x^{q-2}\circ x,$$ which is the same as the observation 
in Zieve \cite{Z} as polynomials. To see this it is enough to normalize all except the first degree-one polynomials to be monic 
in the observation discovered by Zieve \cite{Z}.

Finally, we emphasize that it is of essential importance to introduce the extra point $\infty$ and to work over $\bP^1(\F_q)$ instead of $\F_q$, 
although both Carlitz's original Theorem~\ref{Carlitz} and the notion of Carlitz rank are only about permutations of\/ $\F_q$ and appear to have 
nothing to do with $\infty$. It is clear that neither of the above two consequences can be possibly obtained by working over $\F_q$ without 
introducing $\infty$.



\end{document}